\documentclass[11pt,a4paper]{article}
 
\usepackage{fullpage}
\usepackage[english]{babel}
\usepackage[utf8]{inputenc}
\usepackage{amsmath,amsthm,amssymb,frcursive,mathrsfs,mathtools,stmaryrd}
\usepackage{graphicx,tikz,pgfplots,pgfplotstable}
\usepackage{color}
\usepackage{versions}

\pgfplotsset{every axis/.append style={
tick label style={font=\scriptsize}  
}}


\newtheorem{theorem}{Theorem}

\numberwithin{figure}{section}


\DeclareMathOperator{\Lip}{\Lip}

\DeclareMathOperator{\divg}{\bf div}


\newcommand{\cC}{{\cal C}}

\newcommand{\cE}{{\cal E}}

\newcommand{\cH}{{\cal H}}

\newcommand{\cS}{{\cal S}}

\newcommand{\C}{{\mathbb C}}

\newcommand{\N}{\mathbb{N}}
\newcommand{\R}{\mathbb{R}}

\newcommand{\td}{{\text d}}

\newcommand{\tin}{\text{ in }}
\newcommand{\ton}{\text{ on }}

\newcommand{\tapp}{\text{app}}


\newcommand{\bs}{\backslash}


\newcommand{\ph}{{\varphi}}
\newcommand{\g}[1]{\boldsymbol{#1}}
\newcommand{\norm}[2]{\left\|{#1}\right\|_{#2}}
\newcommand{\inner}[2]{\left<{#1}\right>_{#2}}

\makeindex
\title{Orthogonal modes of a fully anisotropic and heterogeneous elastic medium}
\author{Laurent Seppecher\thanks{Institut Camille Jordan, Ecole Centrale de Lyon \& UCBL, Lyon, F-69003, France}}

\bibliographystyle{abbrv}

\begin{document}

\maketitle

\begin{abstract} The aim of this short note is to give a synthetic presentation of the mathematical elements that are used to solve the elastic wave system of equations in a bounded anisotropic elastic body, in a general framework. In particular, the proof of existence of a basis of orthogonal modes is given. We explain how these modes can by used to efficiently approach dynamic problems in time or harmonic regimes. 
\end{abstract}

The mathematical content of this note is an application of classical results from elliptic equation theory and from spectral theory of self-adjoint operators. Such elements can be found for instance in the well known books \cite{evans2022partial,brezis2011functional} or in French \cite{allaire2005analyse} for scalar elliptic equations. Even if the mathematics described in this note are classical, their presentation in a synthetic way, directly applicable to anisotropic and heterogeneous elastic bodies, may be useful for shortening the study of the dynamics of such media.

We consider a bounded elastic body $\Omega\in\R^3$ attached to fixed referential on a part of its boundary. We investigate the existence of an orthogonal basis of modes and recall how these modes can be used to efficiently approach the solutions of the static, harmonic and dynamic elastic problems.

\section{Notations}

The elastic body $\Omega$ is characterized by a fully anisotropic 4th-order elastic tensor $\g C_{ijk\ell}(x)$ and a density $\rho(x)$. Its boundary is denoted $\partial\Omega$. The space of the $3\times 3$ hermitian matrices is denoted $\cS_3(\C)$. The double dot product denotes the twice-contracted tensorial product. Depending on the situation we write 

\begin{equation}
(\g C:S)_{ij}=\sum_{k\ell}\g C_{ijk\ell}S_{k\ell},\qquad (S:\g C)_{k\ell}=\sum_{ij}S_{ij}\g C_{ijk\ell},\qquad S:T=\sum_{ij}S_{ij}	{T}_{ij}.
\end{equation}
for any $S,T\in S_3(\R)$. 

The space $H^1(\Omega)$ is the Sobolev space of real valued square integrable functions whose the gradient is also square integrable. The space $H^1(\Omega)^3$ is three dimensional vectorial valued version of $H^1(\Omega)$.

\section{General hypotheses}

The following hypotheses on the elastic medium are general and match both the classical framework of continuum mechanics and the mathematical theory of variational formulations that provides the well-posedness of the problem.  

We assume that the elastic body $\Omega$ is Lipschitz smooth (the boundary is locally a graph of a Lipschitz function). We also assume that this body is rigidly attached on a open part $\Gamma_{\text{Dir}}$ of its boundary $\partial \Omega$ to a fixed referential. We then make the three following assumptions on the elastic tensor $\g C$. 

It has bounded coefficients:
\begin{equation}
\g C\in L^\infty\left(\Omega,\R^{3^4}\right),
\end{equation}
it is symmetric on $\cS_3(\R)$:
\begin{equation}
\forall x\in \Omega,\quad\forall S,T\in \cS_3(\R),\quad S:\g C(x):T = T:\g C(x):S,
\end{equation}
and it is positive definite on $\cS_3(\R)$:
\begin{equation}\label{eq:coer}
\exists \alpha>0,\quad\forall x\in \Omega,\quad \forall S\in \cS_3(\R),\quad S:\g C(x):S \geq \alpha \norm{S}{2}^2.
\end{equation}
Remark that such tensor is characterized by $21$ independent coefficients in dimension $3$. We also assume that the density is bounded by above and by below by a positive value. 
\begin{equation}
\rho\in L^\infty(\Omega)\qquad\text{and}\qquad\exists \beta>0,\quad \forall x\in\Omega,\quad \rho(x)\geq \beta. 
\end{equation}

\section{The elasto-static problem}

The body is fixed on  $\Gamma_{\text{Dir}}$ and some boundary stress $\g g$ is applied on $\Gamma_{\text{Neu}}:=\partial \Omega\bs {\Gamma_{\text{Dir}}}$. A internal stress $\g f$ is also applied inside $\Omega$. Hence the displacement $\g u$ satisfies the following system of equations/boundary conditions:
\begin{equation}\label{eq:static}
\left\{\begin{aligned}
-\divg(\g C:\cE(\g u)) &= \g f\quad\tin\Omega\\
\g u &=\g 0\quad\ton\Gamma_\text{Dir}\\
(\g C:\cE(\g u))\cdot \g n &= \g g\quad\ton\Gamma_\text{Neu}.
\end{aligned}\right.
\end{equation}

\begin{theorem} Define the space $\cH:=\{\g u\in H^1(\Omega)^3\ |\ \g u=\g 0 \ton \Gamma_{\text{Dir}}\}$. If $\g f\in L^2(\Omega)^3$ and $\g g\in L^2(\Gamma_{\text{Neu}})^3$, then the problem \eqref{eq:static} admits a unique solution $\g u$ in $\cH$. Moreover, this solution satisfies the energy estimate
\begin{equation}\nonumber
\norm{\cE(\g u)}{L^2(\Omega)}\leq c_1\norm{\g f}{L^2(\Omega)} + c_2\norm{\g g}{L^2(\Gamma_{\text{Neu}})}
\end{equation}
where $c_1$ and $c_2$ are constants depending only on $\Omega,\Gamma_{\text{Neu}}$ and $\alpha$ from \eqref{eq:coer}. In the case $\g g=\g 0$, this problem defines a bounded linear operator $A:\g f\in L^2(\Omega)\mapsto \g u\in \cH$. 
\end{theorem}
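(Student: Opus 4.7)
The plan is to put the system into variational form and apply the Lax--Milgram theorem. Multiplying the first equation by a test function $\g v\in\cH$, integrating by parts, and using the boundary conditions (the Dirichlet condition kills the trace on $\Gamma_{\text{Dir}}$, the Neumann condition supplies the $\g g$ term on $\Gamma_{\text{Neu}}$), I would introduce
\begin{equation}\nonumber
a(\g u,\g v):=\int_\Omega \cE(\g u):\g C:\cE(\g v)\,\td x,\qquad L(\g v):=\int_\Omega \g f\cdot\g v\,\td x+\int_{\Gamma_{\text{Neu}}}\g g\cdot\g v\,\td s,
\end{equation}
and seek $\g u\in\cH$ such that $a(\g u,\g v)=L(\g v)$ for all $\g v\in\cH$. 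Equivalence between this weak formulation and the strong system \eqref{eq:static} is standard once existence is known.

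Next I would verify the hypotheses of Lax--Milgram on $\cH$, which I equip with the $H^1$ norm. Continuity of $a$ follows from $\g C\in L^\infty$ and Cauchy--Schwarz, while continuity of $L$ uses $\g f\in L^2(\Omega)^3$, $\g g\in L^2(\Gamma_{\text{Neu}})^3$ and the continuous trace operator $H^1(\Omega)\to L^2(\partial\Omega)$. The coercivity step is the crux: the pointwise bound \eqref{eq:coer} gives
\begin{equation}\nonumber
a(\g u,\g u)\geq \alpha\norm{\cE(\g u)}{L^2(\Omega)}^2,
\end{equation}
but I still need to compare $\norm{\cE(\g u)}{L^2}$ with $\norm{\g u}{H^1}$. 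This is exactly Korn's second inequality combined with a Poincaré-type estimate on $\cH$: because $\Gamma_{\text{Dir}}$ has positive surface measure, the only infinitesimal rigid motion vanishing on $\Gamma_{\text{Dir}}$ is zero, so a compactness/contradiction argument yields a constant $c_\Omega>0$ with $\norm{\g u}{H^1(\Omega)}\leq c_\Omega \norm{\cE(\g u)}{L^2(\Omega)}$ for every $\g u\in\cH$. This is the main obstacle of the proof; the rest is bookkeeping.

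Once Lax--Milgram delivers existence and uniqueness, I would obtain the energy estimate by testing with $\g v=\g u$:
\begin{equation}\nonumber
\alpha\norm{\cE(\g u)}{L^2(\Omega)}^2\leq a(\g u,\g u)=L(\g u)\leq \norm{\g f}{L^2(\Omega)}\norm{\g u}{L^2(\Omega)}+\norm{\g g}{L^2(\Gamma_{\text{Neu}})}\norm{\g u}{L^2(\Gamma_{\text{Neu}})},
\end{equation}
then bound $\norm{\g u}{L^2(\Omega)}$ and the trace $\norm{\g u}{L^2(\Gamma_{\text{Neu}})}$ by $c_\Omega\norm{\cE(\g u)}{L^2(\Omega)}$ using the Korn--Poincaré inequality above and the trace theorem, and finally divide by $\norm{\cE(\g u)}{L^2(\Omega)}$. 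This produces explicit $c_1,c_2$ depending only on $\Omega$, $\Gamma_{\text{Neu}}$ and $\alpha$.

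For the last assertion, when $\g g=\g 0$ uniqueness makes $\g f\mapsto \g u$ linear, and the energy estimate together with the Korn--Poincaré inequality gives $\norm{\g u}{H^1}\leq C\norm{\g f}{L^2}$, so $A:L^2(\Omega)^3\to\cH$ is bounded. I expect the only genuinely delicate point to be the Korn inequality on the mixed-boundary space $\cH$; if the statement is accepted as a classical ingredient, the rest is an essentially mechanical application of Lax--Milgram.
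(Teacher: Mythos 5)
Your proposal is correct and follows essentially the same route as the paper: variational formulation, Lax--Milgram, with coercivity resting on the Korn and Poincar\'e inequalities on $\cH$ (the paper simply packages these by taking $\norm{\cE(\g u)}{L^2(\Omega)}$ as the norm on $\cH$ and noting its equivalence with the $H^1$ norm, whereas you keep the $H^1$ norm and fold Korn--Poincar\'e into the coercivity step). Your remark that the Korn inequality on the mixed-boundary space is the genuinely delicate ingredient is accurate and, if anything, more explicit than the paper's treatment.
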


\begin{proof} The space $\cH$ is endowed with the inner product 
\begin{equation}\nonumber
\inner{\g u,\g v}{\cH}:=\int_\Omega\cE(\g u):\cE({\g v})
\end{equation}
and the associated norm $\norm{\g u}{\cH}:=\inner{\g u,\g u}{\cH}^{1/2}=\norm{\cE(\g u)}{L^2(\Omega)}$. This is indeed a norm (equivalent to the $H^1$-norm) thanks to Poincaré (constant $c_{\text{P}}$) and Korn (constant $c_{\text{K}}$) inequalities:
\begin{equation}\nonumber
\forall \g u\in\cH,\qquad\norm{\g u}{H^1(\Omega)}\leq c_{\text{P}}\norm{\nabla\g u}{L^2(\Omega)}\leq c_{\text{P}}c_{\text{K}} \norm{\cE(\g u)}{L^2(\Omega)}= c_{\text{P}}c_{\text{K}} \norm{\g u}{\cH}. 
\end{equation}
The variational formulation of $\eqref{eq:static}$ reads
\begin{equation}\label{eq:fv}
\int_\Omega(\g C:\cE(\g u)):\cE({\g v})=\int_\Omega\g f\cdot{\g v} + \int_{\Gamma_\text{Neu}}\g g\cdot {\g v},\qquad \forall \g v\in \cH.
\end{equation}
The left-hand side bilinear form $a(\g u,\g v)$ is continuous in $\cH$:
\begin{equation}\nonumber
|a(\g u,\g v)|\leq \norm{\g C}{L^\infty(\Omega)}\norm{\g u}{\cH}\norm{\g v}{\cH}.
\end{equation}
The right-hand side linear form $L\g v$ is continuous in $\cH$:
\begin{equation}\nonumber\begin{aligned}
|L\g v| &\leq \norm{\g f}{L^2(\Omega)}\norm{\g v}{L^2(\Omega)} + \norm{\g g}{L^2(\Gamma_\text{Neu})}\norm{\g v}{L^2(\Gamma_\text{Neu})}\\
&\leq  \norm{\g f}{L^2(\Omega)}\norm{\g v}{H^1(\Omega)} + c_T\norm{\g g}{L^2(\Gamma_\text{Neu})}\norm{\g v}{H^1(\Omega)}\\
&\leq  c_{\text{P}}c_{\text{K}}\left(\norm{\g f}{L^2(\Omega)} + c_T\norm{\g g}{L^2(\Gamma_\text{Neu})}\right)\norm{\g v}{\cH}.
\end{aligned}\end{equation}
Where $c_\text{T}$ is the norm of the trace operator $T:\cH\to L^2(\Gamma_\text{Neu})$. Moreover $a$ is coercive in $\cH$:
\begin{equation}\nonumber
|a(\g u,\g u)| =  \int_\Omega \cE(\g u):\g C:\cE({\g u})\geq \alpha \int_\Omega |\cE(\g u)|^2\geq \alpha \norm{\g u}{\cH}^2. 
\end{equation}
Thanks to Lax-Milgram theorem, the variational formulation \eqref{eq:fv} admits a unique solution $\g u\in\cH$. Moreover, it satisfies 

 \begin{equation}\nonumber
 \begin{aligned}
  \alpha\norm{\g u}{\cH}^2 &\leq a(\g u,\g u)\leq |L\g u|\leq c_{\text{P}}c_{\text{K}}\left(\norm{\g f}{L^2(\Omega)} + c_T\norm{\g g}{L^2(\Gamma_\text{Neu})}\right)\norm{\g u}{\cH}.\\
    \norm{\g u}{\cH} &\leq \frac{c_{\text{P}}c_{\text{K}}}\alpha\left(\norm{\g f}{L^2(\Omega)} + c_T\norm{\g g}{L^2(\Gamma_\text{Neu})}\right).
 \end{aligned}
 \end{equation}
\end{proof}

\section{The elasto-dynamic problem}

 In the same setting, we look for elasto-dynamic solutions $\g U(x,t)$ that satisfies the elastic wave system of equations/boundary conditions 
\begin{equation}\label{eq:dynamic}
\left\{\begin{aligned}
\rho\partial_{tt}\g U-\divg(\g C:\cE(\g U)) &= \g F\quad\tin\Omega\times \R\\
\g U &=\g 0\quad\ton\Gamma_\text{Dir}\times \R\\
(\g C:\cE(\g U))\cdot \g n &= \g G\quad\ton\Gamma_\text{Neu}\times \R.
\end{aligned}\right.
\end{equation}
where the sources $\g F(x,t)$ and $\g G(x,t)$ can varry in space and time. We may look for time solutions under the frequency decomposition (Fourier) form 
\begin{equation}\nonumber
\g U(x,t)=\int_\R \g u(x,\omega)e^{i\omega t}\td \omega.
\end{equation}
We then write the sources as
 \begin{equation}\nonumber
\g F(x,t)=\int_\R \g f(x,\omega)e^{i\omega t}\td \omega\qquad\text{and}\qquad\g G(x,t)=\int_\R \g g(x,\omega)e^{i\omega t}\td \omega.
\end{equation}
Then for each fixed frequency $\omega\in\R$, $\g u(.,\omega)$ satisfies the harmonic wave system of equations
\begin{equation}\label{eq:harm}
\left\{\begin{aligned}
\divg(\g C:\cE(\g u))+\rho\, \omega^2\g u&= -\g f\quad\tin\Omega\\
\g u &=\g 0\quad\ton\Gamma_\text{Dir}\\
(\g C:\cE(\g u))\cdot \g n &= \g g\quad\ton\Gamma_\text{Neu}.
\end{aligned}\right.
\end{equation}

The specific case $\g f=\g 0$ and $\g g=\g 0$ is called the homogenous harmonic problem and, as we will see, it is an eigenvalue problem. The following theorem states the existence of an orthogonal modal basis in a weighed space depending on the density.

\begin{theorem} There exists a unique non decreasing sequence $(\lambda_k)_{k\in\N}$ satisfying $\lambda_0>0$ and $\lim_{n\to +\infty}\lambda_k=+\infty$ and there exists a sequence of eigenmodes $(\g u_k)_{k\in\N}$ in $\cH$ such that 
\begin{equation}\nonumber
\forall k\in\N,\qquad\left\{\begin{aligned}
\divg(\g C:\cE(\g u_k)) + \rho \lambda_k\g u_k &= \g 0\quad\tin\Omega\\
\g u_k &=\g 0\quad\ton\Gamma\\
(\g C:\cE(\g u_k))\cdot \g n &= \g 0\quad\ton\partial\Omega \bs \Gamma.
\end{aligned}\right.
\end{equation}
The sequence $(\g u_k)_{k\in\N}$ is complete in $L^2(\Omega)^3$ and orthonormal in the sense
\begin{equation}\nonumber
\int_\Omega\rho\, \g u_k\cdot {\g u}_\ell = \delta_{k\ell},\qquad\forall k,\ell\in\N. 
\end{equation}
Moreover, the eigenmodes $\g u_k$ are in $\cC^\infty(\Omega)^3$ and even in  $\cC^\infty(\overline{\Omega})^3$ if $\Omega$ is smooth. 
\end{theorem}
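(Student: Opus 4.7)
The plan is to recast the homogeneous harmonic problem as a spectral problem for a compact self-adjoint operator on a weighted $L^2$ space, then invoke the classical spectral theorem. First, I endow $L^2(\Omega)^3$ with the $\rho$-weighted inner product $\inner{\g u,\g v}{\rho}:=\int_\Omega\rho\,\g u\cdot\g v$; since $\beta\leq\rho\in L^\infty(\Omega)$ this is equivalent to the usual $L^2$ inner product, and I denote the resulting Hilbert space by $L^2_\rho$. Using the operator $A:L^2(\Omega)^3\to\cH$ from the previous theorem (static problem with $\g g=\g 0$), I define
$$T:L^2_\rho\longrightarrow L^2_\rho,\qquad T\g u:=A(\rho\,\g u).$$
A nonzero $\g u\in\cH$ solves the eigenmode system with eigenvalue $\lambda$ exactly when $A(\rho\g u)=(1/\lambda)\g u$, so the eigenmodes of the theorem correspond bijectively to the nonzero eigenpairs of $T$ via $\mu_k=1/\lambda_k$.

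Next, I verify that $T$ is compact, self-adjoint and positive on $L^2_\rho$. Compactness follows from the fact that $T$ maps boundedly into $\cH\hookrightarrow H^1(\Omega)^3$, post-composed with the Rellich--Kondrachov compact embedding $H^1(\Omega)\hookrightarrow L^2(\Omega)$ valid on Lipschitz domains. Self-adjointness and positivity follow from the symmetry and coercivity of the bilinear form $a(\g u,\g v):=\int_\Omega(\g C:\cE(\g u)):\cE(\g v)$: testing the variational formulation against $A(\rho\g v)$ in both directions yields
$$\inner{T\g u,\g v}{\rho}=\int_\Omega\rho(A\rho\g u)\cdot\g v=a(A\rho\g u,A\rho\g v)=\inner{\g u,T\g v}{\rho},$$
while $\inner{T\g u,\g u}{\rho}=a(A\rho\g u,A\rho\g u)\geq\alpha\norm{\cE(A\rho\g u)}{L^2(\Omega)}^2$ is strictly positive unless $\g u=\g 0$.

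The spectral theorem for compact self-adjoint positive operators then produces an orthonormal basis $(\g u_k)_{k\in\N}$ of $L^2_\rho$ consisting of eigenvectors of $T$, with positive eigenvalues $\mu_k$ accumulating only at $0$. Defining $\lambda_k:=1/\mu_k$ rearranged in non-decreasing order gives the sequence of the theorem, with $\lambda_0\geq 1/\|T\|>0$ and $\lambda_k\to+\infty$; the orthogonality relation $\int_\Omega\rho\,\g u_k\cdot\g u_\ell=\delta_{k\ell}$ and completeness in $L^2(\Omega)^3$ are just the orthonormality and completeness in $L^2_\rho$, transported through the norm equivalence. Finally, the smoothness of the eigenmodes is obtained by elliptic regularity applied to $-\divg(\g C:\cE(\g u_k))=\lambda_k\rho\,\g u_k$: assuming additional smoothness of $\g C$ and $\rho$ (the $L^\infty$ hypothesis alone being of course insufficient), an interior bootstrap yields $\g u_k\in\cC^\infty(\Omega)^3$, and a boundary bootstrap yields $\g u_k\in\cC^\infty(\overline{\Omega})^3$ when $\Omega$ is smooth and $\Gamma_{\text{Dir}}$, $\Gamma_{\text{Neu}}$ are suitably separated. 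The main technical point is the self-adjointness computation, which is exactly the place where the density weight $\rho$ must be inserted in the inner product for $T$ to become symmetric; the conceptual heart of the proof is the standard reduction of an otherwise unbounded eigenvalue problem to the spectral analysis of a compact operator through the static solution operator $A$.
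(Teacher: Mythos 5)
Your proposal is correct and follows essentially the same route as the paper: introduce the $\rho$-weighted space $L^2_\rho$, show the operator $\g u\mapsto A(\rho\,\g u)$ is bounded, self-adjoint (via the symmetry of $a$) and compact (via Rellich), and apply the spectral theorem for compact self-adjoint operators, with $\lambda_k=1/\mu_k$. Your explicit remarks on the positivity of the operator and on the need for extra smoothness of $\g C$ and $\rho$ in the regularity step are in fact slightly more careful than the paper's own treatment.
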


\begin{proof} We first define a new Hilbert space 
\begin{equation}\nonumber
L^2_\rho(\Omega)^3:=\left\{\g u:\Omega\to\R^3\ |\ \int_\Omega\rho\, |\g u|^2<+\infty\right\}
\end{equation}
endowed with the inner product
\begin{equation}\nonumber
\inner{\g u,\g v}{\rho}:=\int_\Omega\rho\, \g u\cdot{\g v}.
\end{equation}
This Hilbert space is isomorphic to the classic Hilbert space $L^2(\Omega)^3$ as $0<\beta\leq \rho\leq \norm{\rho}{L^\infty(\Omega)}<+\infty$. We define now the linear operator $A_\rho$ by 
\begin{equation}
A_\rho\g f:=A(\rho\, \g f)
\end{equation}
where $A:L^2(\Omega)\to \cH$ is the bounded linear operator given by Theorem 1. This new operator satisfies the following properties: 

1. $A_\rho:L^2_\rho(\Omega)^3\to L^2_\rho(\Omega)^3$ is bounded. Indeed,
\begin{equation}\nonumber
\begin{aligned}
\norm{A_\rho(\g f)}{L^2_\rho(\Omega)} &=\norm{\sqrt{\rho}A(\rho\, \g f)}{L^2(\Omega)}\leq \norm{\rho}{L^\infty(\Omega)}^\frac12\norm{A(\rho\, \g f)}{L^2(\Omega)}\leq c_1\norm{\rho}{L^\infty(\Omega)}^\frac12\norm{\rho\, \g f}{L^2(\Omega)}\\
& \leq c_1\norm{\rho}{L^\infty(\Omega)}\norm{\sqrt{\rho}\, \g f}{L^2(\Omega)} \leq c_1\norm{\rho}{L^\infty(\Omega)}\norm{\g f}{L^2_\rho(\Omega)}. 
\end{aligned}
\end{equation}

2. The operator $A_\rho$ is self-adjoint in $L^2_\rho(\Omega)^3$. Indeed, consider $\g f\in L^2_\rho(\Omega)^3$ and call $\g u:=A_\rho\g f$. It satisfies
\begin{equation}\nonumber
a(\g u,\g v) = \int_\Omega\rho\, \g f\cdot{\g v},\qquad\forall \g v\in\cH.
\end{equation}
Then for any $\g \ph\in L^2_\rho(\Omega)^3$,
\begin{equation}\nonumber
\begin{aligned}
\inner{A_\rho\g f,\g \ph}\rho &= \inner{\g u,\g \ph}\rho = \int_\Omega\rho\, \g u\cdot {\g \ph} = {a(A_\rho\g \ph,\g u)}  = a(\g u,A_\rho\g \ph) = \int_\Omega\rho\, \g f\cdot {A_\rho\g \ph}=\inner{\g f,A_\rho\g \ph}\rho.
\end{aligned}
\end{equation}

3. The operator $A_\rho:L^2_\rho(\Omega)^3\to L^2_\rho(\Omega)^3$ is compact. Indeed, from the Rellish theorem, the canonical injection $I:H^1(\Omega)\hookrightarrow L^2(\Omega)$ is compact. By equivalence of topologies $I:\cH\hookrightarrow L^2_\rho(\Omega)^3$ is also compact. Hence the operator $A_\rho:L^2_\rho(\Omega)^3\to L^2_\rho(\Omega)^3$ is compact as the composition of $A_\rho:L^2_\rho(\Omega)^3\to \cH$ continuous and $I:\cH\hookrightarrow L^2_\rho(\Omega)^3$ compact. 

From the spectral theory of self-adjoint compact operators (see \cite{brezis2011functional} chapter $6$), $A_\rho$ admits an orthogonal spectral decomposition: there exists a unique positive decreasing sequence of eigenvalues $(\mu_n)_{n\in\N}$ of $\R^+$ such that $\mu_n\to 0$ when $n\to +\infty$, and there exists an orthogonal Hilbert basis of eigenvectors $(\g f_n)_{n\in\N}$ of $L^2_\rho(\Omega)^3$ such that 
\begin{equation}\nonumber
A_\rho\g f_n=\mu_n\g f_n,\qquad \forall n\in\N.
\end{equation}
Call now $\lambda_n:=1/\mu_n$ and $\g u_n:=A_\rho\g f_n/\norm{A_\rho\g f_n}{L^2_\rho(\Omega)}\in\cH$. For all $n\in\N$, $\g u_n$ satisfies 
\begin{equation}\nonumber
-\divg(\g C:\cE(\g u_n)) = \rho\frac{\g f_n}{\norm{A_\rho\g f_n}{L^2_\rho(\Omega)}}=\rho\lambda_n\frac{A_\rho\g f_n}{\norm{A_\rho\g f_n}{L^2_\rho(\Omega)}}=\rho\lambda_n\g u_n.
\end{equation}
Moreover, $\norm{\g u_n}{L^2_\rho(\Omega)} = 1$ and for any $n\neq m$, $\int_\Omega\rho\, \g u_n\cdot {\g u_m}=0$ because
\begin{equation}\nonumber
\begin{aligned}
\int_\Omega\rho\, A_\rho\g f_n\cdot {A_\rho\g f_m} = \mu_n\mu_m \int_\Omega\rho\,  \g f_n\cdot {\g f_m} = \inner{\g f_n\cdot \g f_m}\rho = 0.
\end{aligned}
\end{equation}
The smoothness of the eigenmodes $\g u_n$ is showed be recurrence and with the use of the Sobolev injection theorem. 
\end{proof}

\section{Approximation of the solution using the modes}

If the modes are computed in advance, they can be used to efficiently approach any solution of the elastic system by computing only few first coefficients of the modal decomposition of the solution. 

\subsection{Elasto-static solution}
Let $\g u\in \cH$ be a solution of the elastostatic  system with sources $\g f$ and $\g g$. We can decompose $\g u$ on the eigenmodes:
\begin{equation}
\g u=\sum_{n\in\N}\alpha_n\g u_n
\end{equation}
where $(\alpha_n)\in \ell^2(\N)$. Hence 
\begin{equation}\nonumber
\begin{aligned}
\sum_{n\in\N}\alpha_n\int_\Omega(\g C:\cE(\g u_n)):\cE(\g v) &=\int_\Omega\g f\cdot\g v + \int_{\Gamma_\text{Neu}}\g g\cdot \g v,\qquad \forall \g v\in \cH\\
\sum_{n\in\N}\alpha_n\lambda_n\int_\Omega \rho\, \g u_n\cdot \g v &=\int_\Omega\g f\cdot\g v + \int_{\Gamma_\text{Neu}}\g g\cdot \g v,\qquad \forall \g v\in \cH.
\end{aligned}
\end{equation}
Choosing $\g v={\g u_n}$ we get from the orthogonallity of the modes in $L^2_\rho(\Omega)^3$,
\begin{equation}\nonumber
\alpha_n\lambda_n=\int_\Omega\g f\cdot{\g u_n} + \int_{\Gamma_\text{Neu}}\g g\cdot {\g u_n},\qquad \forall n\in\N.
\end{equation}
If we call $f_n:=\int_\Omega\g f\cdot{\g u_n}$ and $g_n:=\int_{\Gamma_\text{Neu}}\g g\cdot {\g u_n}$, we get the formula 
\begin{equation}
\alpha_n=\frac{f_n+g_n}{\lambda_n}. 
\end{equation}

In a practical point of view, if one has knowledge of the modes $(\lambda_n,\g u_n)$, given $\g f$ and $\g g$, we fix a maximum mode number $N\in\N$ and compute the inner products $f_0,\dots,f_N$, $g_0,\dots,g_N$ and the corresponding coefficients $\alpha_0,\dots,\alpha_N$. The approximated solution is computed as the truncation of the series
\begin{equation}\nonumber
\g u^\tapp(x):=\sum_{n=0}^N\alpha_n\g u_n(x). 
\end{equation}
An $L^2$-error bound can be computed. It depends both on the smoothness of $\g f$ and $\g g$ and the increase speed of the eigenvalues sequence.

\subsection{Harmonic solutions}

Let $\g u(.,\omega)\in \cH$ be a solution of the of the harmonic system with sources $\g f$ and $\g g$. We can decompose $\g u$ in space using the eigenmodes:
\begin{equation}\label{eq:modharm}
\g u(x,\omega)=\sum_{n\in\N}\alpha_n(\omega)\g u_n(x).
\end{equation}
We then write 
\begin{equation}\nonumber
\begin{aligned}
\sum_{n\in\N}\alpha_n(\omega)\int_\Omega(\g C:\cE(\g u_n)):\cE(\g v)-\omega^2\alpha_n(\omega)\int_\Omega \rho\, \g u_n\cdot \g v &=\int_\Omega\g f(.,\omega)\cdot\g v + \int_{\Gamma_\text{Neu}}\g g(.,\omega)\cdot \g v,\qquad \forall \g v\in \cH\\
\sum_{n\in\N}\alpha_n(\omega)(\lambda_n-\omega^2)\int_\Omega \rho\, \g u_n\cdot \g v &=\int_\Omega\g f(.,\omega)\cdot\g v + \int_{\Gamma_\text{Neu}}\g g(.,\omega)\cdot \g v,\qquad \forall \g v\in \cH.
\end{aligned}
\end{equation}
Choosing $\g v={\g u_n}$ we get from the orthogonallity of the modes:
\begin{equation}\nonumber
\alpha_n(\omega)(\lambda_n-\omega^2)=\int_\Omega\g f(.,\omega)\cdot{\g u_n} + \int_{\Gamma_\text{Neu}}\g g(.,\omega)\cdot {\g u_n},\qquad \forall n\in\N.
\end{equation}
If we call $f_n(\omega):=\int_\Omega\g f(.,\omega)\cdot{\g u_n}$ and $g_n(\omega):=\int_{\Gamma_\text{Neu}}\g g(.,\omega)\cdot {\g u_n}$, we get the formula 
\begin{equation}\label{eq:alphan2}
\alpha_n(\omega)=\frac{f_n(\omega)+g_n(\omega)}{\lambda_n-\omega^2}. 
\end{equation}
We clearly see the problem when $\omega^2$ is equal to one eigenvalue $\lambda_n$. If $\omega^2\notin\{\lambda_n\ |\ n\in\N\}$ it can be shown that the problem \eqref{eq:harm} admits a unique solution in $\cH$ and this solution is given by formulas \eqref{eq:modharm} and \eqref{eq:alphan2}.

\subsection{ Dynamic solutions}
Let $\g U(.,t)\in \cH$ be a solution of the of the time wave system of equations with sources $\g f$ and $\g g$. This solution can be approached by Fourier recomposition mode-by-mode through the formula

\begin{equation}
\g U(x,t)=\sum_{n\in\N}\int_\R \alpha_n(\omega)e^{i\omega t}\td \omega\, \g u_n(x),\qquad \forall x\in\Omega,\ t\in\R,
\end{equation}
where $\alpha_n(\omega)$ is given by \eqref{eq:alphan2}.

\bibliography{biblio.bib}

\end{document}